\documentclass{article}
\usepackage[margin=1.3 in]{geometry}
\usepackage[english]{babel}
\usepackage{authblk}
\usepackage{amsthm}
\usepackage{amsmath}
\usepackage{amssymb}
\usepackage{amsfonts}
\usepackage[square,numbers]{natbib}
\usepackage{hyperref}
\usepackage{multirow}
\usepackage{babel}
\usepackage{array}
\usepackage[T1]{fontenc}
\newtheorem{theorem}{Theorem}

\theoremstyle{definition}
\newtheorem{exmple}{Example}[section]

\def\ff{\ensuremath{\mathbf{F}}}
\title{Involutary pemutations  over finite fields 
given by trinomials and quadrinomials}
\author{Anitha G and P Vanchinathan}
\begin{document}
\maketitle
\begin{center}
{School of Advanced Sciences\\ 
 Vellore Institute of Technology\\ 
 Chennai 600 127, India\\}
\texttt{anitha.g2019@vitstudent.ac.in, \quad vanchinathan.p@vit.ac.in} 
\end{center}

\begin{abstract}
   For all finite fields of $q$ elements where  $q\equiv1\pmod4$ we
	have constructed permutation polynomials which have order 2 as 
	permutations, and have 3 terms, or 4 terms as polynomials.
	Explicit formulas for their coefficients are given in terms
	of the primitive elements of the field.
	We also give polynomials providing involutions with 
	larger number of terms but coefficients will be conveniently
	only two possible 
	values. Our procedure gives at least $(q-1)/4$ trinomials, and
	$(q-1)/2$  quadrinomials, all 
	yielding involutions with unique fixed points over  a field of 
	order $q$. Equal number of  involutions with 
	exactly $(q+1)/2$ fixed-points are provided as quadrinomials.
\end{abstract}

Keywords:\quad  involutions; permutation trinomials; quadrinomials

\section{Introduction}
Permutation polynomials which are  of order 2 as elements of the  permutation
group are called  involutions. Being their own inverses as permutations,
they provide some extra advantage  in  applications to cryptography. 

Earlier work on involutary pemutations have been carried out by \cite{charpin,niu,zheng,zha} among many others.
Another practical requirement is to find  polynomials  with fewer terms as 
they can be evaluated fast. 
In that respect many authors have worked on producing permutation 
binomials, trinomials and quadrinomials.
Recent results of Bartoli and Zini \cite{bz} and the work of Rai and Gupta 
\cite{rai} are negative statements:
they investigate trinomials of some restricted type
and conclude that  most of them are  not permutation polynomials.
Their  results underscore the difficulty
of locating  trinomials providing permutations.
However there has been limited literature focussing on producing 
permutation polynomials of fewer terms as well as producing involutions.
That is producing polynomials  where both the combinatorics (cycle structure)
and the algebra (fewer terms)  can be determined. One significant paper the authors are aware of is \cite{meidl} which determines cycle structure of permutation polynomials of Carlitz rank upto 3.

We report our success on this endeavour. 
\textit{All the findings in this paper are involutory permutations  with a majority of them
over fields of order  $q\equiv1\pmod 4$.}
We are able to construct $(q-1)/4$ permutation trinomials in each of these field (See the Appendix for examples of such polynomials).
In Section 4, we consider finite fields of general order including those in characteristic 2.

Besides trinomials, we also construct  quadrinomials and another
collection of polynomials with $2d$ terms with $d$ a factor of $q-1$, where
$q$ is the order of the finite field. To compensate the 
complexity with higher number of terms they have a convenience of all nonzero coefficents being $a$ or $-a$ for a fixed element $a$.

The polynomials  we construct are of  the form $x^r h(x^s)$ for $s|q-1$.
These are  studied extensively in papers 
dealing with  cyclotomic mappings. See e.g., \cite{hou,Hou,wang2007,wang2013}. 

We describe below the  results we have obtained now.
\vspace{.5pc}
\begin{theorem} \label{t1}
	For a finite field $\ff_q$ with $q\equiv1\pmod 4$,  
     let $d=(q-1)/4$, and $g$ be a generator
	of the multiplicative group. 
	Depending on  a choice of $i, 0\leq i \leq d-1$ define $a_0,a_1$ and $a_2$ as follows:
      \[a_0=\frac{g^{2(4i+2)}+1}{2g^{4i+2}},\quad a_1=\frac{(g^d+1)(g^{2(4i+2)}-1)}{4g^{4i+d+2}},\quad a_2=g^d a_1\]
 Then    $f(x)=a_2x^{3d+1}+a_1x^{d+1}+a_0x $ is an involutiory
	permutation polynomial over $\ff_{q}$ 
	with zero as the only fixed-point.
   \end{theorem}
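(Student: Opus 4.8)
The plan is to exploit the cyclotomic structure of $f$. Writing $f(x)=x\,h(x^d)$ with $h(y)=a_2y^3+a_1y+a_0$, note that $s=d=(q-1)/4$ divides $q-1$ and the index is $\ell=(q-1)/d=4$. The map $x\mapsto x^d$ sends $\ff_q^{*}$ onto the group $\mu_4=\{1,\zeta,-1,-\zeta\}$ of fourth roots of unity, where $\zeta=g^d$ satisfies $\zeta^2=-1$. Let $H=\{x:x^d=1\}$ be its kernel, a subgroup of order $d$, and partition $\ff_q^{*}$ into the four cosets $g^jH$, $j=0,1,2,3$. On $g^jH$ one has $x^d=\zeta^j$ constant, so $f$ restricts to multiplication by the scalar $c_j:=h(\zeta^j)$. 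Thus everything reduces to understanding these four scalars and the induced action on cosets.

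Next I would compute the $c_j$. Using the relation $a_2=\zeta a_1$ together with $\zeta^4=1$, the cubic terms collapse, giving $c_0=c_1=a_0+a_1(1+\zeta)$ and $c_2=c_3=a_0-a_1(1+\zeta)$, so only two distinct multipliers $A=a_0+a_1(1+\zeta)$ and $B=a_0-a_1(1+\zeta)$ occur. The heart of the argument is the simplification of these. Setting $u=g^{4i+2}$ and using $(1+\zeta)^2=2\zeta$ (which follows from $\zeta^2=-1$), one checks that $a_1(1+\zeta)=(u^2-1)/(2u)$, while $a_0=(u^2+1)/(2u)$; hence $A=u$ and $B=u^{-1}$. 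I expect this coefficient bookkeeping to be the only real obstacle --- once the two multipliers are shown to be reciprocal powers of $g$, the combinatorial conclusion is immediate.

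Finally I would read off the action on cosets. Since $4i+2\equiv 2\pmod 4$, both $u$ and $u^{-1}$ lie in the coset $g^2H$. Therefore $f$ multiplies cosets $0$ and $1$ by $u$ and cosets $2$ and $3$ by $u^{-1}$, inducing the permutation $0\mapsto2$, $2\mapsto0$, $1\mapsto3$, $3\mapsto1$ of the four cosets, i.e.\ the double transposition $(0\,2)(1\,3)$. Each restriction is multiplication by a nonzero scalar and hence a bijection of a coset onto its image, so $f$ permutes $\ff_q^{*}$ and fixes $0$: it is a permutation of $\ff_q$. Composing the two legs of each $2$-cycle multiplies by $u\cdot u^{-1}=1$, so $f(f(x))=x$ for every $x$, proving $f$ is an involution. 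And because $f$ carries each coset to a different coset, no nonzero $x$ satisfies $f(x)=x$; hence $0$ is the unique fixed point.
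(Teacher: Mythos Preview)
Your argument is correct and is essentially the same approach as the paper's: both evaluate $f$ coset-by-coset modulo the index-$4$ subgroup $H$, compute the four multipliers $h(\zeta^j)$, and show they equal $g^{\pm(4i+2)}$, yielding the swap $(0\,2)(1\,3)$ on cosets. Your presentation is a bit more streamlined---observing $c_0=c_1$ and $c_2=c_3$ in advance via $a_2=\zeta a_1$, and using $(1+\zeta)^2=2\zeta$ and $A\cdot B=1$ to collapse the case analysis---but the underlying computation is identical to the paper's four explicit evaluations.
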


As  $i$ varies  we see that by this method 
we get   $(q-1)/4$ involutions  
over a single finite field, all represented as trinomials.

   We describe our next result which also provides involutions without
   nonzero fixed points, but now represented by  quadrinomials.

\begin{theorem} \label{t2}
	Under the same notations as in the previous theorem,
	for a choice of $i, 0\leq i \leq d-1$ define $a_0,a_1,a_2$ and $a_3$ as follows:
      \[a_0=\frac{(g^2+1)(g^{2(4i+2)}+1)}{4g^{4i+3}}, \quad
	a_1=\frac{(g^{d+2}+1)(g^{2(4i+2)}-1)}{4g^{4i+d+3}},\]
	\[a_2=\frac{(g^2-1)(g^{2(4i+2)}+1)}{4g^{4i+3}},  \quad
       a_3=\frac{(g^{d+2}-1)(g^{2(4i+2)}-1)}{4g^{4i+d+3}}\]
 Then  the polynomial $f(x)=a_3x^{3d+1}+a_2x^{2d+1}+a_1x^{d+1}+a_0x$ is an involution  over $\ff_{q}$ with zero as the only fixed-point.
   \end{theorem}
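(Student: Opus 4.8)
The plan is to exploit the cyclotomic structure $f(x)=x\,h(x^{d})$ with $h(y)=a_3y^3+a_2y^2+a_1y+a_0$, exactly as one would for the trinomial of Theorem~\ref{t1}. Since $d=(q-1)/4$, the element $\zeta:=g^{d}$ is a primitive fourth root of unity, and for $x\neq0$ the quantity $x^{d}$ lies in $\mu_4=\{1,\zeta,\zeta^2,\zeta^3\}=\{1,\zeta,-1,-\zeta\}$. I would partition $\ff_q^{*}$ into the four index-$d$ cosets $C_j=\{x\in\ff_q^{*}:x^{d}=\zeta^{j}\}$, $j=0,1,2,3$, each of size $d$. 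On $C_j$ the polynomial acts as the scaling $x\mapsto c_j x$, where $c_j:=h(\zeta^{j})$; thus $f$ sends $C_j$ bijectively onto $c_jC_j$.

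First I would record the standard reduction for such cyclotomic mappings. Because each $c_j$ will turn out to be a power of $g$, we have $c_j^{d}\in\mu_4$, so $c_jC_j$ is again a single coset $C_{\pi(j)}$, where the index map $\pi$ is defined by $\zeta^{\pi(j)}=c_j^{d}\zeta^{j}$. Then $f$ permutes $\ff_q$ precisely when $\pi$ is a permutation of $\{0,1,2,3\}$ and every $c_j\neq0$, while the involution property $f^2=\mathrm{id}$ on $\ff_q^{*}$ is equivalent to the two conditions that $\pi$ be an involution and that $c_{\pi(j)}c_j=1$ for each $j$. Finally, if in addition $\pi$ has no fixed index, then no coset is stabilised, so no nonzero element can be fixed; together with $f(0)=0$ this yields the claim that $0$ is the unique fixed point.

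The computational heart of the argument --- and the step I expect to be the main obstacle --- is evaluating the four sums $c_j=a_3\zeta^{3j}+a_2\zeta^{2j}+a_1\zeta^{j}+a_0$ and checking that they collapse to clean monomials. Writing $u=g^{4i+2}$ (so that $g^{2(4i+2)}=u^2$), using $g^{d+2}=\zeta g^2$, and grouping the coefficients by their common factors $(u^2+1)$ and $(u^2-1)$, the telescoping I anticipate is
\[c_0=gu=g^{4i+3},\quad c_1=\tfrac{u}{g}=g^{4i+1},\quad c_2=\tfrac{g}{u}=g^{-(4i+1)},\quad c_3=\tfrac{1}{gu}=g^{-(4i+3)}.\]
The factor $4$ in the denominators and the paired $\pm1$ numerators are precisely what make these cancellations occur, so verifying them carefully is where the real work lies.

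With the $c_j$ in hand the remaining checks are immediate. Since $g^{kd}=\zeta^{k}$ and $\zeta^{4}=1$, one finds $c_0^{d}=\zeta^{3}$, $c_1^{d}=\zeta$, $c_2^{d}=\zeta^{3}$, $c_3^{d}=\zeta$, whence $\pi$ sends $0\mapsto3$, $1\mapsto2$, $2\mapsto1$, $3\mapsto0$; that is, $\pi=(0\,3)(1\,2)$, a fixed-point-free involution. The reciprocal relations then hold on the nose, $c_0c_3=g^{4i+3}g^{-(4i+3)}=1$ and $c_1c_2=g^{4i+1}g^{-(4i+1)}=1$, so $c_{\pi(j)}c_j=1$ for all $j$ and every $c_j$ is a nonzero power of $g$. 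By the reduction above, $f$ is a permutation, it satisfies $f^2=\mathrm{id}$, and because $\pi$ fixes no index its only fixed point is $0$, completing the proof.
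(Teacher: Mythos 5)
Your proposal is correct and follows essentially the same route as the paper: the paper's (sketched) proof also partitions $\ff_q^*$ by the exponent of $g$ modulo $4$ and shows $f$ acts on each class by a monomial scaling, swapping the classes in pairs, which is precisely your computation of $c_j=h(\zeta^j)$ and $\pi=(0\,3)(1\,2)$ with $c_{\pi(j)}c_j=1$. Indeed your values $c_0=g^{4i+3}$, $c_1=g^{4i+1}$, $c_2=g^{-(4i+1)}$, $c_3=g^{-(4i+3)}$ reproduce exactly the swaps $g^{4k}\leftrightarrow g^{4(k+i)+3}$ and $g^{4k+1}\leftrightarrow g^{4(k+i)+2}$ asserted in the paper's proof.
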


Next result provides involutions with more fixed points.
\begin{theorem} \label{t3}
	Under the same notations as above,
for any choice of $i$, $0\leq i \leq d-1 $ we define
	coefficients $a_3,a_2,a_1$ and $a_0$ in one of the
	following two  ways: 
\begin{enumerate}
	\item[(a)] $
		\displaystyle
	a_0=\frac{(g^{4i+2}+1)^2}{4g^{4i+2}}, \quad
	a_1=\frac{g^{2(4i+2)}-1}{4g^{4i+2}},\quad
	a_2=\frac{(g^{4i+2}-1)^2}{4g^{4i+2}}, \quad
	a_3=a_1 $  
\item[(b)]  $ 
	\displaystyle
	a_0=\frac{(g^{4i+2}+1)^2}{4g^{4i+2}},\quad
	\quad a_1=\frac{g^{2(4i+2)}-1}{4g^{d+4i+2}},\quad
	\quad a_2= 1-a_0,\quad a_3=-a_1 $
\end{enumerate}
In both the cases, the  polynomial
\[f(x)=a_3x^{3d+1}+a_2x^{2d+1}+a_1x^{d+1}+a_0x\] 
is an involution over $\ff_q$ with $\frac{q+1}{2}$ fixed-points.
\end{theorem}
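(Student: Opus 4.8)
The plan is to exploit the cyclotomic structure hidden in the exponents $3d+1,\,2d+1,\,d+1,\,1$. Writing $f(x)=x\,h(x^{d})$ with $h(y)=a_3y^3+a_2y^2+a_1y+a_0$, the behaviour of $f$ is governed entirely by the values of $h$ on the image of the $d$-th power map. Since $d=(q-1)/4$, the element $\zeta:=g^{d}$ has order $4$ in $\ff_q^{\times}$, so $\zeta^2=-1$ and $\zeta$ is a primitive fourth root of unity. The nonzero elements partition into the four cyclotomic cosets $C_j=g^{j}\langle g^4\rangle$, $j\in\{0,1,2,3\}$, each of size $d$, and on $C_j$ one has $x^{d}=\zeta^{j}$. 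Hence $f$ restricted to $C_j$ is the linear map $x\mapsto c_j x$ with $c_j:=h(\zeta^{j})$, carrying $C_j$ onto $C_{\sigma(j)}$, where $\sigma(j)\equiv j+\nu(c_j)\pmod 4$ and $\nu(c_j)$ denotes the coset index of $c_j$. Because multiplication by a nonzero scalar is a bijection on each coset, this reduces the whole statement to three checks on the four scalars $c_0,c_1,c_2,c_3$: (i) each $c_j\neq 0$ and $\sigma$ is a bijection of $\mathbb{Z}/4\mathbb{Z}$, equivalent to $f$ being a permutation; (ii) $\sigma^2=\mathrm{id}$ and $c_{\sigma(j)}\,c_j=1$ for all $j$, equivalent to $f\circ f=\mathrm{id}$; and (iii) the fixed-point set is $\{0\}$ together with every $C_j$ for which $c_j=1$, so exactly two indices $j$ with $c_j=1$ give $1+2d=(q+1)/2$ fixed points.

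Next I would evaluate the four constants. Put $t=g^{4i+2}$. In case (a) the coefficients are the symmetric expressions $a_0=\tfrac{(t+1)^2}{4t}$, $a_2=\tfrac{(t-1)^2}{4t}$ and $a_1=a_3=\tfrac{t^2-1}{4t}$, whence $a_0-a_2=1$, $a_0+a_2=\tfrac12(t+t^{-1})$ and $a_1+a_3=\tfrac12(t-t^{-1})$. Substituting $\zeta^0=1,\zeta^1=\zeta,\zeta^2=-1,\zeta^3=-\zeta$ into $h$, the relation $a_1=a_3$ forces $h(\zeta)=h(-\zeta)=a_0-a_2=1$, while $h(\pm1)=(a_0+a_2)\pm(a_1+a_3)=t^{\pm1}$; thus $(c_0,c_1,c_2,c_3)=(t,\,1,\,t^{-1},\,1)$. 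For case (b) the same substitution (now with $a_2=1-a_0$, $a_3=-a_1$, and the extra factor $g^{d}=\zeta$ in the denominator of $a_1$) gives $a_1+a_3=0$ and $a_0+a_2=1$, so $h(1)=h(-1)=1$, while the $\zeta^{-1}$ produced by the denominator cancels against $\zeta$ to yield $h(\pm\zeta)=t^{\pm1}$; thus $(c_0,c_1,c_2,c_3)=(1,\,t,\,1,\,t^{-1})$.

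Finally I would read off the conclusions from the key arithmetic fact $t^{d}=(g^{d})^{4i+2}=\zeta^{4i+2}=\zeta^2=-1$, which places both $t$ and $t^{-1}$ in the coset $C_2$, while $1\in C_0$. In case (a) this makes $\sigma=(0\;2)$ fixing $1$ and $3$; in case (b) it makes $\sigma=(1\;3)$ fixing $0$ and $2$. In either case $\sigma$ is a bijection, so $f$ permutes $\ff_q$; moreover $\sigma^2=\mathrm{id}$, and the reciprocal pairing $c_{\sigma(j)}c_j=1$ holds because $t\cdot t^{-1}=1$ and $1\cdot 1=1$, so $f$ is an involution. Exactly two of the $c_j$ equal $1$ (namely $c_1,c_3$ in case (a) and $c_0,c_2$ in case (b)), so the fixed-point count is $1+2d=(q+1)/2$, as claimed.

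The computations in the middle step are routine once the cyclotomic reduction is in place; the crux of the argument — and the only place where the precise form of the coefficients matters — is verifying that $g^{4i+2}$ lands in the coset $C_2$ and that the two nontrivial constants are genuine reciprocals. This is exactly what forces $\sigma$ to be an involution of $\mathbb{Z}/4\mathbb{Z}$ carrying the correct reciprocal-pairing, and it is the feature distinguishing these fixed-point-rich involutions from the fixed-point-free ones of Theorems \ref{t1} and \ref{t2}.
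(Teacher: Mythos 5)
Your proof is correct, and at bottom it travels the same road as the paper: both arguments rest on the quartic coset decomposition of $\ff_q^{*}$ induced by $x\mapsto x^{d}$ (the paper phrases it as ``non-squares, then squares split by $b^{d}=\pm1$'', which is exactly your $C_1\cup C_3$, $C_0$, $C_2$). The difference is packaging and, more importantly, coverage. The paper argues directly: it uses the case-(a) identities $a_1=a_3$ and $a_0=a_2+1$ to rewrite $f(x)=a_3(x^{3d+1}+x^{d+1})+a_2(x^{2d+1}+x)+x$, notes that non-squares are fixed because $b^{2d}=-1$ annihilates the first two groups, and then verifies $f(f(b))=b$ on squares by hand after showing $f(b)=\alpha^{\pm1}b$ with $\alpha=g^{4i+2}$. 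You instead invoke the general cyclotomic-mapping formalism --- $f(x)=x\,h(x^{d})$, multipliers $c_j=h(\zeta^{j})$, involution iff the induced coset map $\sigma$ satisfies $\sigma^{2}=\mathrm{id}$ with $c_{\sigma(j)}c_j=1$ --- so that the permutation property, the involution property, and the fixed-point count all fall out of the single tuple $(c_0,c_1,c_2,c_3)$. What this buys is genuine: the paper's written proof covers only case (a), since its opening identities fail in case (b), where $a_3=-a_1$ and $a_0+a_2=1$ and the roles of the cosets are reversed (the squares become the fixed points while $C_1$ and $C_3$ are exchanged). Your computation of $(c_0,c_1,c_2,c_3)=(t,1,t^{-1},1)$ in case (a) versus $(1,t,1,t^{-1})$ in case (b), together with the observation that $t,t^{-1}\in C_2$ (which in particular forces $t\neq1$, so the fixed-point count is exactly $(q+1)/2$ rather than $q$), treats both cases uniformly and therefore establishes strictly more of the stated theorem than the paper's own text does.
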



This paper is organized as below: In Section 2 we give proofs for
the construction of involutions without non-zero fixed points.
In Section 3, proofs are provided for involutions with more fixed points.
In Section 4, we consider fields of arbitrary order including those of 
characteristic 2.  Theorems 4 and 5 there provide a  pair 
of interesting involutions which have some kind of interesting duality
relationship between them. Two more results give quadrinomial involutions in
	odd characteristic with coefficents from a restricted set.
In the Appendix we give a table of polynomials constructed using our theorems.


\section{Fixed-point-free involutions}
  We now  proceed to prove that  our construction of trinomials,
given in Theorem \ref{t1} are involutions.

\begin{proof}
	For this purpose   we write the  nonzero elements 
  as $g^j,j=0,1,\ldots,q-2$, and look at $j$ modulo 4;
then we will show that $f$ interchanges, for each $k$, the  
	element $g^{4k}$ with $g^{4k+4i+2}$, and 
	the element $ g^{4k+1}$ with $ g^{4k+4i+3}$.

	\noindent Case (i): Evaluating at  $g^j$  for $j=4k$.\\
\begin{flalign*}
   f(g^{4k})&=a_2(g^{4k})^{3d+1}+a_1(g^{4k})^{d+1}+a_0 g^{4k} &\\
    &=g^{4k}\bigl(a_2(g^{4k})^{3d}+a_1(g^{4k})^d+a_0 \bigr)&\\
    &=g^{4k}(a_2+a_1+a_0) &\\
	&=g^{4k}\Bigl( \frac{(g^d+1)(g^{2(4i+2)}-1)}{4g^{4i+d+2}} (g^d+1)+ \frac{g^{2(4i+2)}+1}{2g^{4i+2}} \Bigr)
	\mbox{ (substituting for $a_j$'s) } &\\
 &=g^{4k}\Bigl( \frac{2g^d(g^{2(4i+2)}-1)}{4g^{4i+d+2}}+ \frac{g^{2(4i+2)}+1}{2g^{4i+2}} \Bigr)&\\
 &=g^{4k}\Bigl( \frac{g^{2(4i+2)}-1}{2g^{4i+2}}+ \frac{g^{2(4i+2)}+1}{2g^{4i+2}} \Bigr) &\\
    &=g^{4(k+i)+2}&\\
\end{flalign*}
\noindent Case (ii):  Evaluating at  $g^j$  for $j\equiv 2\pmod 4$.\\                  
\begin{flalign*}
   f(g^{4(k+i)+2})&=a_2(g^{4(k+i)+2})^{3d+1}+a_1(g^{4(k+i)+2})^{d+1}+a_0 g^{4(k+i)+2} &\\
    &=g^{4(k+i)+2}(a_2g^{2d}+a_1g^{2d}+a_0) &\\
	&= g^{4(k+i)+2}(-g^da_1-a_1+a_0) \mbox{ (since } g^{2d}=-1)  \\
   &=g^{4(k+i)+2}\Bigl(\frac{-g^{2(4i+2)}+1}{2g^{4i+2}}
	+\frac{g^{2(4i+2)}+1}{2g^{4i+2}}\Bigr) 
	\mbox{(by substituting the values of } a_0, a_1) &\\
    &=g^{4(k+i)+2}g^{-4i-2} &\\
    &=g^{4k}
\end{flalign*}
\noindent Case(iii): Evaluating at  $g^j$  for $j\equiv 1\pmod 4$.\\
Now $f(x)$ at $g^{4k+1}$
\begin{flalign*}
   f(g^{4k+1})&=a_2(g^{4k+1})^{3d+1}+a_1g^{4k+1})^{d+1}+a_0 g^{4k+1} &\\
    &=g^{4k+1}\bigl(a_2(g^{4k+1})^{3d}+a_1(g^{4k+1})^d+a_0 \bigr)& \\
    &=g^{4k+1}(-a_2g^d+a_1g^d+a_0) &\\
    &=g^{4k+1}\bigl(a_1(g^d+1)+a_0 \bigr) &\\
	&=g^{4k+1}g^{4i+2} 
	\mbox{(by substituting the values of $a_0$ and $a_1$)}&\\
    &=g^{4(k+i)+3}
\end{flalign*}
\noindent Case(iv):  Evaluating at  $g^j$  for $j\equiv 3\pmod 4$.\\ 
\begin{flalign*}
   f(g^{4(k+i)+3})&= a_2(g^{4(k+i)+3})^{3d+1} +a_1(g^{4(k+i)+3})^{d+1}+a_0 g^{4(k+i)+3} &\\
    &=g^{4(k+i)+3}\bigl(a_2(g^{4(k+i)+3})^{3d}+a_1(g^{4(k+i)+3})^d+a_0 \bigr)& \\
    &=g^{4(k+i)+3}\bigl(a_2g^d-a_1g^d+a_0 \bigr) &\\
    &=g^{4(k+i)+3}(a_1(g^d-1)+a_0) &\\
    &=g^{4(k+i)+3}g^{-4i-2}
	\mbox{  (by substituting the values of }  a_0, a_1) &\\
    &=g^{4k+1}
\end{flalign*}
 This completes the proof of $f(x)$ is an fixed point free involution over $\ff_q$.
\end{proof}


Next  we present the ideas for proving  of  Theorem \ref{t2}:
\begin{proof}
We can show this quadrinomial to be  an involution
	by showing the following pairs for each $i=0,1,\ldots,d-1$ are
	swapped.
 \begin{enumerate}
\item  $g^{4k}\leftrightarrow g^{4(k+i)+3}$
\item  $g^{4k+1}\leftrightarrow g^{4(k+i)+2}.$
\end{enumerate}
	The arguments follow the same lines as in the proof of Theorem 
	\ref{t1} and so omitted.
\end{proof}
\section{Involutions with more fixed points }
Now we move on to provide the proof for Theorem  \ref{t3}.
\begin{proof}
Since $a_1=a_3  $ and  $a_0=a_2+1$,
Given $f(x)$ can be  written as
 \[f(x)=a_3x^{3d+1}+a_2x^{2d+1}+a_3x^{d+1}+(a_2+1)x\]
	\[\hphantom{f()}=a_3\bigl(x^{3d+1}+ x^{d+1}\bigr)+a_2\bigl(x^{2d+1}+x \bigr)+x \]

Enough to verify the condition $f(f(x))=x$ for every $x\in\ff_q$.

	\noindent Case (i):  When $b\in\ff_q^*$ is not a square. 
	In this case we show it is a fixed point. As $d=(q-1)/4$, we have
$b^{2d }= -1$. So 
\begin{align*}
f(b)&=a_3\bigl( b^{3d+1}+b^{d+1} \bigr) +  a_2\bigl(b^{2d+1}+b\bigr)+b &\\
&= a_3b^{d+1}\bigl( b^{2d}+1  \bigr)+a_2b\bigl( b^{2d}+1 \bigr)+b &\\
&=b 
\end{align*}
\noindent Case (ii):  When $b\in\ff_q^*$ is a square.

	In this case we have $b^{2d}=1$.
We proceed to evaluate $f$ at $b$:
\begin{align*}
f(b)&= a_3b^{d+1}\bigl( b^{2d}+1  \bigr)+a_2b\bigl( b^{2d}+1 \bigr)+b &\\
    &=2a_3b^{d+1}+2a_2b+b   &\\
    &=b\bigl( 2a_3b^d+2a_2+1  \bigr) &\\
\end{align*}
Using a simplified notation $\alpha= g^{4i+2}$ and  
substituting the value of $a_3$ and $a_2$ in terms of $\alpha$
	and  we get
\begin{equation*}
f(b) =
\left\{ \begin{array}{rcl}
b\alpha & \mbox{if} & b^d=1  \\ 
b\alpha^{-1} & \mbox{if} & b^d=-1 \\
\end{array}\right.
\end{equation*}
 Composition of $f( f(b) )$ is\\
	\noindent{Subcase (i):} $b^d=1$. Then  $f(b) = b\alpha$.
So we get
\begin{align*}
f(f(b))&=f(b\alpha) &\\
&=b\alpha\{a_3[(b\alpha)^{3d}+(b\alpha)^{d}]+a_2[(b\alpha)^{2d}+1]+1   \} &\\
&=b\alpha \{ a_3[\alpha^{3d}+\alpha^d]+ a_2[\alpha^{2d}+1] +1     \} &\\
&=b\alpha\Bigl( -2a_3+2a_2+1 \Bigr) \mbox{[ since $\alpha^d=g^{2d}=-1$]} &\\
	&=b \mbox{ (by the definition of  $a_3$ and $a_2$) }  &\\ 
\end{align*}
\noindent{Subcase (ii) $b^d=-1$:} 
\begin{align*}
f(f(b))&=f(b\alpha^{-1}) &\\
&=b\alpha^{-1} \{a_3[(b\alpha^{-1})^{3d}+(b\alpha^{-1})^{d}]+a_2[(b\alpha^{-1})^{2d}+1]+1   \} &\\
&=b\alpha^{-1} \{ a_3[b^{3d}\alpha^{-3d}+b^{d}\alpha^{-d}]+ a_2[b^{2d}\alpha^{-2d}+1] +1     \} &\\
&=b\alpha^{-1} \{ a_3[-\alpha^{-3d}-\alpha^{-d}]+ a_2[\alpha^{-2d}+1] +1     \} &\\
&=b\alpha^{-1}\Bigl( 2a_3+2a_2+1 \Bigr) \mbox{[ since $\alpha^{-d}=g^{-2d}=-1$]} &\\
&=b
\end{align*}
 This  completes the proof of $f(f(b))=b$ for every $b\in\ff_q$.
Hence $f(x)$ is an involution  over $\ff_q$. 
\end{proof}

\noindent
\textbf{Remark:}\quad The above quadrinomials in Theorem [\ref{t3}], regarded  
as (involutary) permutations have cycle type \[ 2^{\frac{(q-1)}{4}} +1^{\frac{q+1}{2}} \].
\section{Involutary polynomials with more terms\\  but restricted coefficents}
After involutions from polynomials of 3 and 4 terms we move on to construct 
more of them but from a  $2d$-term polynomial, where $d$ is a divisor of $q-1$.
In this section we consider general $q$ (including characteristic 2)
and write  $q=1\pmod d$ (previously we had $d=4$).
Say,   $q-1=md$.

Now, consider the  set \[\mu_m{:=}\{a\in\ff_{q}: a^m=1\}\] 

\begin{theorem}\label{t5}
 The  polynomial\[h_1(x)=m \sum_{i=0}^{d-1}\Bigl( x^{im+1} - x^{(d-i)m-1} \Bigr)+x\] 
is an involution over $\ff_q$.
\end{theorem}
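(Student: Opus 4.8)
The plan is to evaluate $h_1$ pointwise and show that it acts as inversion on $\mu_m$ and as the identity everywhere else, which makes it manifestly an involution. The engine of the whole computation is the observation that both sums defining $h_1$ are geometric series in the single quantity $x^m$. First I would rewrite
\[
\sum_{i=0}^{d-1} x^{im+1} = x\sum_{i=0}^{d-1}(x^m)^i,
\qquad
\sum_{i=0}^{d-1} x^{(d-i)m-1} = x^{-1}\sum_{j=1}^{d}(x^m)^j,
\]
where the second identity uses the substitution $j=d-i$. For any nonzero $x$ one has $x^{q-1}=(x^m)^d=1$, so the factor $(x^m)^d-1$ vanishes; hence $\sum_{i=0}^{d-1}(x^m)^i$ equals $d$ when $x\in\mu_m$ and equals $\frac{(x^m)^d-1}{x^m-1}=0$ when $x\notin\mu_m$, and the same dichotomy holds for $\sum_{j=1}^{d}(x^m)^j$.

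Next I would assemble these evaluations. For $x\notin\mu_m$ with $x\neq0$ both sums vanish, leaving $h_1(x)=x$. For $x\in\mu_m$ the two sums contribute $dx$ and $dx^{-1}$, so
\[
h_1(x)=m\bigl(dx-dx^{-1}\bigr)+x=(md)x-(md)x^{-1}+x.
\]
At this point I would invoke the crucial identity $md=q-1=-1$ in $\ff_q$ (since $q\cdot 1=0$ in $\ff_q$), which collapses the expression to $h_1(x)=-x+x^{-1}+x=x^{-1}$. Because every exponent occurring in $h_1$ is positive, substituting $x=0$ gives $h_1(0)=0$. This reduction is valid in every characteristic, including $2$, where the same cancellation still yields $x^{-1}$.

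These three computations give the clean description that $h_1$ fixes $0$ and each element outside $\mu_m$, and sends every $x\in\mu_m$ to $x^{-1}$. Since $\mu_m$ is a multiplicative group, inversion maps $\mu_m$ into itself and is its own inverse, so $h_1(h_1(x))=(x^{-1})^{-1}=x$ for $x\in\mu_m$; on the complement $h_1$ is the identity and hence trivially self-inverse. Therefore $h_1\circ h_1=\mathrm{id}$, i.e. $h_1$ is an involution over $\ff_q$.

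I expect the only genuine subtleties to be the field identity $md=-1$ in $\ff_q$ and the case split $x\in\mu_m$ versus $x\notin\mu_m$; the rest is the routine geometric-sum evaluation powered by $x^{q-1}=1$. It is also worth noting that $|\mu_m|=\gcd(m,q-1)=m$, so $\mu_m$ really is the full set of $m$-th roots of unity, although this count is not needed for the involution property itself.
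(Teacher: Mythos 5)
Your proof is correct and follows essentially the same route as the paper's: evaluate $h_1$ pointwise via the geometric series in $x^m$ (the paper factors it as $(mb-mb^{-1})(1+b^m+\cdots+b^{(d-1)m})+b$, which is the same computation), split into the cases $x\in\mu_m$ versus $x\notin\mu_m$, and use $md=q-1=-1$ in $\ff_q$ to obtain the description ``inversion on $\mu_m$, identity elsewhere,'' from which the involution property is immediate.
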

\begin{proof}
 To prove $h_1(x)$ is an  involution we need to discuss following cases.
Let  $b\in\ff_q$ a nonzero element. Evaluating at $b$ we obtain
\[h_1(b)=m(b+b^{m+1}+\cdots+b^{(d-1)m+1})-m(b^{dm-1}+x^{(d-1)m-1}+\cdots +b^{m-1})+b\]
This can be rewritten as 
	\[h_1(b)=(mb-mb^{-1})(1+b^m+\cdots+b^{(d-1)m}) +b\]
\noindent Case (i),  $b\in \mu_m$:
\begin{align*}
h_1(b)&=mbd-mb^{-1}d+b &\\
	&=-b+b^{-1}+b \ \mbox{( since $md=q-1$ which is same as $-1$)]} &\\
      &=b^{-1}
\end{align*}
\noindent Case (ii),  $b\notin \mu_m$: Summing the finite geometric series,
we get
	\[ h_1(b)=(mb-mb^{-1})(0)+b =b \]
This completes the proof that $h_1(x)$ is an involution.
This polynomial  represents the  function
as described below:
	\begin{equation} 
		h_1(x) =
\left\{ \begin{array}{lcl}
	0  & \mbox{ if  $x=0$ }  \\
    x& \mbox{ if $x\notin \mu_m$} \\
	x^{-1}  & \mbox{ if $x\in \mu_m$} \\	
\end{array}\right.
	\end{equation}
\end{proof}
\begin{theorem} \label{t6}
 The  polynomial\[h_2(x)=x^{dm-1}+m \sum_{i=0}^{d-1}\Bigl( x^{(d-i)m-1}-x^{im+1} \Bigr)\] 
is an involution over $\ff_q$.
\end{theorem}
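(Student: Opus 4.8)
The plan is to follow the same strategy as in the proof of Theorem \ref{t5}, since $h_2$ is evidently designed to be the ``dual'' involution to $h_1$: where $h_1$ inverts the elements of $\mu_m$ and fixes everything else, we expect $h_2$ to fix the elements of $\mu_m$ and invert all remaining nonzero elements. Concretely, I would evaluate $h_2$ at an arbitrary nonzero $b\in\ff_q$, collapse the two partial sums into a single geometric series, and then split into the cases $b\in\mu_m$ and $b\notin\mu_m$.

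First I would reindex the two sums. Writing $S=1+b^m+b^{2m}+\cdots+b^{(d-1)m}$, one has $\sum_{i=0}^{d-1}b^{im+1}=bS$, while the descending-exponent sum reindexes as $\sum_{i=0}^{d-1}b^{(d-i)m-1}=b^{-1}(b^m+b^{2m}+\cdots+b^{dm})$; since $b^{dm}=b^{q-1}=1$, the bracket here equals $S$ as well. This lets me rewrite the evaluation as
\[
h_2(b)=b^{dm-1}+(mb^{-1}-mb)\,S,
\]
and the same relation $b^{q-1}=1$ gives $b^{dm-1}=b^{-1}$.

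Next I would dispose of the two cases. When $b\in\mu_m$ we have $b^m=1$, so $S=d$ and the middle term becomes $md(b^{-1}-b)=(q-1)(b^{-1}-b)=-(b^{-1}-b)$, whence $h_2(b)=b^{-1}-(b^{-1}-b)=b$. When $b\notin\mu_m$ the geometric sum is $S=(b^{dm}-1)/(b^m-1)=0$, so the middle term vanishes and $h_2(b)=b^{-1}$. Together with $h_2(0)=0$, this shows $h_2$ represents the piecewise map that fixes $0$ and every element of $\mu_m$ and inverts every nonzero element outside $\mu_m$ --- exactly the dual of the function attached to $h_1$.

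Finally, to conclude that $h_2$ is an involution it suffices to verify $h_2(h_2(x))=x$ on this explicit description. The only nontrivial case is $x\notin\mu_m$ with $x\neq0$: here $h_2(x)=x^{-1}$, and since $\mu_m$ is a subgroup its complement is stable under inversion, so $(x^{-1})^m=(x^m)^{-1}\neq1$ forces $x^{-1}\notin\mu_m$ and hence $h_2(x^{-1})=(x^{-1})^{-1}=x$; the cases $x=0$ and $x\in\mu_m$ are immediate as these are fixed. The main (and essentially only) obstacle is the bookkeeping in the first step --- correctly reindexing the exponents $(d-i)m-1$ so that both sums share the common factor $S$ and collapse cleanly; once that is in place, the case analysis and the involution check are routine.
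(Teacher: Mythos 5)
Your proposal is correct and takes essentially the same approach as the paper: the paper proves Theorem \ref{t5} by collapsing both sums into the geometric series $1+b^m+\cdots+b^{(d-1)m}$ and splitting into the cases $b\in\mu_m$ and $b\notin\mu_m$, then declares the proof of Theorem \ref{t6} ``similar'' and records precisely the piecewise description (fix $0$ and $\mu_m$, invert everything else) that your computation establishes. Your closing observation that the complement of $\mu_m$ is stable under inversion makes the final involution check explicit where the paper leaves it implicit, but the argument is the same.
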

\begin{proof}
Proof is similar to previous theorem. The polynomial $h_2(x)$ represents
the  function admitting the description below:
\begin{equation} 
		h_2(x) =
\left\{ \begin{array}{lcl}
	0  & \mbox{ if  $x=0$ }  \\
    x& \mbox{ if $x \in \mu_m$} \\
	x^{-1}  & \mbox{ if $x\notin \mu_m$} \\	
\end{array}\right.
	\end{equation}
\end{proof}
\noindent
\textbf{Remark:} Interesting to note that the two
polynomials $h_1(x)$ and $h_2(x)$ 
mentioned in theorems \ref{t5} and \ref{t6}
have a dual relationship in two senses:
(i) Comparing (1) and (2) shows that while
one is an inversion on the subset $\mu_m$  (and identity on the 
complement of it)
the other has  the same behaviour but, with the roles of $\mu_m$ and
the complement reversed.
(ii) one is obtained from the other by reversing the
cefficients. Hence in a suitable algebraic extension, the
nonzero roots of one of the polynomials are
the reciprocals of the nonzero roots of the other. 
\begin{exmple} 
Take a Sophie Germain prime $q=2p+1$ where $p$ is a prime number. While considering elements of order $m=\frac{q-1}{2}$ in theorems \ref{t5}, and \ref{t6}
	we get the following pair of `dual' quadrinomials in $\ff_q[x]$:
	\[ (m+1)x^{2m-1}+(m+1)x^{m+1}+mx^{m-1}+(m+1)x,  \]
\[(m+1)x^{2m-1}+mx^{m+1}+(m+1)x^{m-1}+(m+1)x \] 
Here $m+1=-m=\frac{q+1}{2}$.

For the specific case of  $q=23=2\times 11+1$, we get 
	quadrinomials in $\ff_{23}[x]$:
\[12x^{21} + 12x^{12} + 11x^{10} + 12x, \qquad
	12x^{21} + 11x^{22} + 12x^{10} + 12x  \]
\end{exmple}
\begin{exmple} 
Consider a field of order $q=2^{2n}$ for $n>1$. Obviously $3|q-1$, define $m=\frac{q-1}{3}$.  Theorem  
\ref{t5} in this case gives us an  involution 
	from a 5-term  polynomial
	 \[ x^{2m+1}+ x^{2m-1}+ x^{m+1}+x^{m-1}+x \]
While theorem \ref{t6} gives a different polynomial 
\[   x^{3m-1}+x^{2m+1}+ x^{2m-1}+ x^{m+1}+x^{m-1}\] 
which is also an involution.
\end{exmple}

\subsection*{Quadrinomials with restricted coefficients}
Now we go back to odd characteristic.
Let $m,k,n$  be positive integers such that $nm=q-1$ with $m$ is an odd, $(m,n)=1$ and $k=\frac{n}{2}$. 
 We are ready to present a quadrinomial
where any two  nonzero coefficents are  either equal or additive inverses.

\begin{theorem} \label{t7}
Let $t=2m^{-1} \pmod k$ be a  least positive integer  modulo $k$. Consider a polynomial\[f_1(x)=\frac{1}{2}(x^{(k+t)m-1}-x^{km+1}+x^{tm-1}+x   )\] is an involution on $\ff_q$.
\end{theorem}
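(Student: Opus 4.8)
The plan is to follow the strategy of Theorems~\ref{t5} and~\ref{t6}: rather than grinding out $f_1(f_1(x))=x$ monomial by monomial, I would first read off the piecewise description of the function $x\mapsto f_1(x)$ determined by the condition $x^{km}=\pm1$, and then verify the involution property separately on each piece. Before anything else I would record where the hypotheses enter: since $q-1=nm$ is even and $m$ is odd, $n$ must be even, so $k=n/2$ is a genuine integer; and since $(m,n)=1$ and $k\mid n$ we get $\gcd(m,k)=1$, so $m$ is invertible modulo $k$ and $t=2m^{-1}\bmod k$ is well defined with $tm\equiv 2\pmod k$.

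The first step is to regroup $f_1$ by pulling out the factors $x$ and $x^{-1}$, writing each exponent as a multiple of $m$ shifted by $\pm1$:
\begin{align*}
2f_1(x)&=x^{-1}x^{km}x^{tm}-x\,x^{km}+x^{-1}x^{tm}+x\\
&=x^{-1}x^{tm}\bigl(x^{km}+1\bigr)+x\bigl(1-x^{km}\bigr).
\end{align*}
Because $q-1=2km$, every nonzero $x=g^{j}$ satisfies $x^{2km}=1$, hence $x^{km}=\pm1$; more precisely $x^{km}=1$ exactly when $j$ is even, i.e.\ when $x$ is a square, and $x^{km}=-1$ exactly when $x$ is a nonsquare. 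Substituting these two values collapses the grouped terms and yields
\[
f_1(x)=\begin{cases} x^{tm-1} & \text{if } x\in\ff_q^{*} \text{ is a square},\\ x & \text{if } x\in\ff_q^{*} \text{ is a nonsquare},\\ 0 & \text{if } x=0.\end{cases}
\]

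It remains only to handle the squares, since nonsquares and $0$ are fixed and hence trivially respected by an involution. Writing a square as $x=g^{2a}$ gives $f_1(x)=g^{2a(tm-1)}$, whose exponent is even, so $f_1$ maps the group $C$ of nonzero squares into itself; and $C$ is cyclic of order $km=(q-1)/2$. Thus $f_1|_C$, being the power map $x\mapsto x^{tm-1}$, is an involution precisely when $(tm-1)^2\equiv 1\pmod{km}$. Expanding, $(tm-1)^2-1=tm(tm-2)$, and the defining property $tm\equiv 2\pmod k$ lets me write $tm-2=k\ell$, whence $tm(tm-2)=km\cdot t\ell$ is divisible by $km$. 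This gives $x^{(tm-1)^2}=x$ for every $x\in C$, so $f_1(f_1(x))=x$ on the squares as well, completing the argument.

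The only real obstacle is the bookkeeping in the first step: regrouping the four monomials correctly and recognizing $x^{km}$ as the square/nonsquare indicator. Once the piecewise form is secured, the entire involution property reduces to the single congruence $(tm-1)^2\equiv1\pmod{km}$, which is immediate from the choice of $t$; I would not expect any further difficulty.
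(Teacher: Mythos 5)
Your proof is correct, and while its first half reaches the same intermediate fact as the paper, the second half takes a genuinely different and cleaner route. Both arguments start by exploiting $x^{km}=x^{(q-1)/2}=\pm1$: the paper also ends up with the description ``$f_1$ fixes every nonsquare and acts as $x\mapsto x^{tm-1}$ on squares,'' but only after parametrizing $\ff_q^*$ as $g^{ni+mj}$ (via $\gcd(m,n)=1$) and splitting into the cases $i=0$ versus $i>0$ and $j$ odd versus $j$ even. The divergence is in how the involution property is then concluded. The paper stays inside the index parametrization and uses $tm\equiv2\pmod{k}$ to compute $(g^{tm-1})^{ni+mj}=g^{-ni+mj}$ for $j$ even, so the permutation is exhibited explicitly as the product of the transpositions $g^{ni+mj}\leftrightarrow g^{-ni+mj}$ with $i>0$, $j$ even; this explicit swap structure is what the paper later relies on for the cycle-type claim $2^{n(m-1)/4}+1^{(n(m+1)+2)/2}$. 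You instead observe that the nonzero squares form a cyclic group $C$ of order $km$, check (correctly, and this step is necessary) that $C$ is stable under $f_1$, and reduce the whole theorem to the single congruence $(tm-1)^2\equiv1\pmod{km}$, which follows in one line from $tm\equiv2\pmod{k}$ via $(tm-1)^2-1=tm(tm-2)=km\cdot t\ell$. Your route eliminates all the bookkeeping with $g^{ni+mj}$ and isolates the structural reason the construction works (any exponent $e$ with $e^2\equiv1\pmod{km}$ in place of $tm-1$ would give an involution); the paper's route costs more computation but pays back the fixed points and $2$-cycles explicitly, which is the information needed for its cycle-type remark and which your argument would have to extract separately.
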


\begin{proof}
 With $g$  as a generator of  the multiplicative group $\ff_q^*$,
nonzero elements can be written as $g^{ni+mj},$ for $0\leq i\leq m-1$ and $0\leq j\leq n-1$.

So  $f_1(x)$ can be rewritten as \[f_1(x)=\frac{1}{2}(x^{(k+t)m-1}+x^{km+1}+x^{tm-1}+x )-x^{km+1}\]
We will prove the theorem  by evaluating $f_1(x)$ at  all nonzero elements.
\begin{flalign*}
f(g^{ni+mj})&=\frac{1}{2}\Bigl( (g^{ni+mj})^{(k+t)m-1}+ (g^{ni+mj})^{km+1} +(g^{ni+mj})^{tm-1} + g^{ni+mj} \Bigr)-(g^{ni+mj})^{km+1}) &\\
&=\frac{1}{2}\Bigl( (g^{(k+t)m-1})^{ni+mj}+ (g^{km+1})^{ni+mj} +(g^{tm-1})^{ni+mj} + g^{ni+mj} \Bigr)-(g^{km+1})^{ni+mj}) &\\
&=\frac{1}{2}\Bigl( (-1)^{ni+mj}(g^{tm-1})^{ni+mj}+(-1)^{ni+mj}g^{ni+mj}+(g^{tm-1})^{ni+mj}+g^{ni+mj}\Bigr)-(-1)^{ni+mj}g^{ni+mj} &\\
	&=\frac{1}{2}\Bigl( (-1)^{mj}(g^{tm-1})^{ni+mj}+(-1)^{mj}g^{ni+mj}+(g^{tm-1})^{ni+mj} + g^{ni+mj} \Bigr)-(-1)^{mj}g^{ni+mj} &\\  &=\frac{1}{2}\Bigl( [1+(-1)^{mj}][g^{ni+mj}+(g^{tm-1})^{ni+mj}] \Bigl)-(-1)^{mj}g^{ni+mj} \qquad \ldots(\mathrm{Eqn1})\\ 
\end{flalign*}
\noindent Case 1: When $i=0$  \\
 Now $f(x)$ at $g^{mj}$
\begin{align*}
f(g^{mj}) &=\frac{1}{2}\Bigl( [1+(-1)^{mj}][g^{mj}+(g^{tm-1})^{mj}] \Bigl)-(-1)^{mj}g^{mj} &\\
\end{align*}
\noindent Subcase 1(a),  $j$ is  odd:
\begin{align*}
f(g^{mj})&= g^{mj}
\end{align*}
\noindent Subcase 1(b),  $j$ is  even:
\begin{align*}
f(g^{mj})&=\frac{1}{2}\Bigl( 2g^{mj}+2(g^{tm-1})^{mj} \Bigr)- g^{mj} &\\
         &= (g^{tm-1})^{mj} &\\
         &= g^{mj} \mbox{[by substituting the value of $t$ ]}
\end{align*}

	\noindent Case 2a) : $i>0$, and  $j$ odd:\\
	In this case  (Eqn1)  above simplifies to   
    $f(g^{ni+mj})= g^{ni+mj} $ making it a fixed point.

	\noindent Case 2b): $i>0$,  $j$ even:
\begin{align*}
f(g^{ni+mj})&=\frac{1}{2}\Bigl( 2g^{ni+mj}+2(g^{tm-1})^{ni+mj} \Bigr)- g^{ni+mj} &\\
         &= (g^{tm-1})^{ni+mj} &\\
         &= (g^{tm})^{ni+mj}(g^{-ni-mj} &\\
         &= (g^{tm})^{mj}(g^{-ni-mj}) \mbox{[ since $g^{nm}=1$]  }&\\
         &= g^{-ni+mj} \mbox{[by substituting the value of $t$ ]}
\end{align*}
This completes the proof.
Moreover, this shows the above polynomial
	admits a simpler alternative description as below:
\begin{equation} 
		f(g ^{ni+mj}) =
\left\{ \begin{array}{lcl}

	g ^{mj}  & \mbox{ if $i=0$}  \\
	g ^{ni+mj}  & \mbox{ if $i >0,j$ is an odd } \\
	g ^{-ni+mj}  & \mbox{ if $i >0,j$ is an even } \\
\end{array}\right.
	\end{equation}
\end{proof}
\begin{theorem} \label{t8}

Under the notation described above, the following polynomials
\begin{equation*}
f(x) =
\left\{ \begin{array}{lcl}
	\frac{1}{2}(x^{(2k-2)m-1}+x^{km+1}-x^{(k-2)m-1}+x) & \mbox{\rm if}  & m\equiv -1\pmod k \\[2pt]
	\frac{1}{2}(-x^{(k+2)m-1}+x^{km+1}-x^{2m-1}+x) & \mbox{\rm if} & m\equiv 1\pmod k    \\[2pt]
	\frac{1}{2}(x^{(k+1)m-1}+x^{km+1}-x^{m-1}+x) & \mbox{\rm if} & m\equiv 2\pmod k   \\[2pt]
 \frac{1}{2}(-x^{(2k-1)m-1}+x^{km+1}+x^{(k-1)m-1}+x) & \mbox{\rm if} & m\equiv -2\pmod k   \\ 
\end{array}\right.
\end{equation*}
are involutions over $\ff_q$ .
\end{theorem}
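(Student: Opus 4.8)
The plan is to mimic the proof of Theorem~\ref{t7} almost verbatim, with the single change that the parity of $j$ now plays the opposite role. Since $\gcd(m,n)=1$ and $nm=q-1$, every nonzero element of $\ff_q$ is uniquely of the form $g^{ni+mj}$ with $0\le i\le m-1$ and $0\le j\le n-1$, so it suffices to evaluate each polynomial at such an element. Throughout I would keep the three facts that drive the computation in Theorem~\ref{t7}: $g^{km}=g^{(q-1)/2}=-1$, $g^{nm}=1$, and, because $m$ is odd while $n=2k$ is even, $(-1)^{ni+mj}=(-1)^{mj}=(-1)^{j}$.

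Next I would observe that the four displayed polynomials are one object in disguise. The hypotheses $m\equiv 1,-1,2,-2\pmod k$ are exactly the cases in which $t:=2m^{-1}\bmod k$ equals $2,\ k-2,\ 1,\ k-1$ respectively, and feeding these values into the exponent pattern $(k+t)m-1,\ km+1,\ tm-1,\ 1$ reproduces the four exponent sets in the statement. Hence it is enough to prove the single uniform assertion that
\[f(x)=\tfrac12\bigl(\epsilon\,x^{(k+t)m-1}+x^{km+1}-\epsilon\,x^{tm-1}+x\bigr)\]
is an involution, where $\epsilon\in\{1,-1\}$ is the leading sign, and then read off the four specializations.

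Writing $u=g^{ni+mj}$ and reducing each monomial with the identities above gives $u^{km+1}=(-1)^{j}u$, $u^{tm-1}=g^{E}$ and $u^{(k+t)m-1}=(-1)^{j}g^{E}$, where $E=tm^{2}j-ni-mj$; the extra factor $(-1)^{j}$ in the last monomial comes from the $km$ sitting inside its exponent. Because $x^{km+1}$ and $x$ enter with a plus sign while the other two carry opposite signs, the coefficient of $g^{ni+mj}$ collapses to $\tfrac12(1+(-1)^{j})$ and that of $g^{E}$ to $-\epsilon\cdot\tfrac12(1-(-1)^{j})$. Thus for even $j$ the element is fixed, while for odd $j$ one is left with the single term $f(u)=-\epsilon\,g^{E}$. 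The decisive simplification is that $tm\equiv 2\pmod k$ forces $tm^{2}j\equiv 2mj\pmod{nm}$ up to a factor $g^{km}=-1$, so $g^{E}=\pm g^{-ni+mj}$; choosing $\epsilon$ so that the residual signs cancel yields $f(u)=g^{-ni+mj}$ on the odd-$j$ layer, a map that fixes $i=0$ and interchanges $i\leftrightarrow m-i$, and is therefore an involution.

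I expect the sign bookkeeping to be the one genuinely delicate point. Tracking the $(-1)^{j}$ factors is routine, but the final reduction $g^{E}=\pm g^{-ni+mj}$ hides a subtlety: if the residual sign is $-1$ rather than $+1$ it contributes $g^{km}$ and shifts $j$ to $j+k$, which for odd $k$ changes the parity of $j$ and would push the image into the fixed layer, destroying the involution property. It is precisely the requirement that the surviving monomial reduce to $+g^{-ni+mj}$ that pins down the leading sign $\epsilon$, and I would finish by determining, case by case, the value of $\epsilon$ (equivalently the sign of $x^{tm-1}$) that makes $f$ an involution in each of the four congruence classes.
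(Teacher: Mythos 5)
Your computational skeleton is the same as the paper's proof (which writes out only the case $m\equiv-1\pmod k$ and omits the rest as similar): write the nonzero elements as $g^{ni+mj}$, use $u^{km}=(-1)^{j}$ (valid since $n$ is even and $m$ odd), and split according to the parity of $j$ and whether $i=0$. Your dictionary $t=2,\,k-2,\,1,\,k-1$ for $m\equiv1,-1,2,-2\pmod k$ and your monomial reductions $u^{km+1}=(-1)^{j}u$, $u^{tm-1}=g^{E}$, $u^{(k+t)m-1}=(-1)^{j}g^{E}$ are all correct. The first gap is that you stop precisely at the step that carries all the content. For odd $j$ the surviving term is $-\epsilon g^{E}$, and writing $tm-2=\lambda k$ gives $g^{E}=(-1)^{\lambda}g^{-ni+mj}$, so proving the theorem amounts to computing the parity of $\lambda$ in each congruence class (for instance, when $m\equiv-1\pmod k$ one has $\lambda=m-2(m+1)/k$, which is odd because $m$ is odd, forcing $\epsilon=+1$, matching the first displayed polynomial). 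You explicitly defer these four parity computations, so as submitted no case of the theorem is actually verified.

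The second, more serious problem is that your reduction cannot work for the second case: your own even-$j$ computation shows that for that layer to be fixed, the coefficients of $x^{(k+t)m-1}$ and $x^{tm-1}$ must be opposite (their sum multiplies $g^{E}\neq0$), but in the stated polynomial for $m\equiv1\pmod k$ both are negative, so it lies outside your uniform family for every choice of $\epsilon$. In fact that polynomial is not an involution at all: its coefficients sum to zero, so $f(1)=0=f(0)$ and $f$ is not even injective; equivalently it factors as $\frac12(x^{km}+1)(x-x^{2m-1})$, which annihilates every $u$ with $u^{km}=-1$. Carrying your argument to completion produces the sign-corrected involution $\frac12(-x^{(k+2)m-1}+x^{km+1}+x^{2m-1}+x)$, which strongly suggests a typo in the theorem; but your claim that the four stated polynomials are specializations of one uniform assertion is false as written, and for that case no proof of the literal statement is possible.
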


Proof for the first polynomial:
 When  $m\equiv -1\pmod k$:  

\begin{proof}
Given \[ \frac{1}{2}(x^{(k-2)m}(x^{km}-1)+x(x^{km}+1))\]
Evaluating the polynomial at all nonzero elements
\begin{flalign*}
f(g^{ni+mj})&=\frac{1}{2}\Bigl((g^{ni+mj})^{(k-2)m-1}((g^{ni+mj})^{km}-1)+(g^{ni+mj})((g^{ni+mj})^{km}+1)\Bigr)&\\
&= \frac{1}{2}\Bigl((g^{ni+mj})^{(k-2)m-1}((-1)^{ni+mj}-1)+(g^{ni+mj})((-1)^{ni+mj}+1)\Bigr)  &\\
&= \frac{1}{2}\Bigl((g^{ni+mj})^{(k-2)m-1}[(-1)^{mj}-1]+(g^{ni+mj})[(-1)^{mj}+1]\Bigr)  &\\
&= \frac{1}{2}\Bigl( (-1)^{mj}(g^{-2m-1})^{ni+mj} [(-1)^{mj}-1] +g^{ni+mj}[(-1)^{mj}+1]\Bigr)  &\\
	&= \frac{1}{2}\Bigl( (g^{-2m-1})^{ni+mj} [1-(-1)^{mj}] +g^{ni+mj}[(-1)^{mj}+1]\Bigr) \ldots (Eqn2) &\\
\end{flalign*}
\noindent Case 1,  $i=0$.
	\[f(g^{mj})=\frac{1}{2} \Bigl((g^{-2m-1})^{mj} [1-(-1)^{mj}]+g^{mj}[(-1)^{mj}+1]  \Bigr) \]
In this equation considering  $j$ odd and even separately,
we can easily see that $g^{mj}$ is a fixed point
	
\noindent Case (2a), $i\ge1$ and  $j$ odd:  \\
	In this case   (Eqn2) above  simplifies to
\begin{align*}
    f(g^{ni+mj})&= (g^{-2m-1})^{ni+mj} &\\ 
             &= (g^{-2m-1})^{ni} (g^{-2m-1})^{mj} &\\
             &=g^{-ni}g^{mj} &\\
             &=g^{-ni+mj}
\end{align*}
	\noindent Case (2b), $i\ge1$ and  $j$ even:
	Again (Eqn2) above with $j$ even allows us to see
	  \[f(g^{ni+mj})=g^{ni+mj}\]
is a fixed-point. This completes the proof.
Summarising we get an alternative description of the polynomial
	as below:
\begin{equation*} 
		f(g ^{ni+mj}) =
\left\{ \begin{array}{lcl}

	g ^{mj}  & \mbox{ if $i=0$}  \\
	g ^{-ni+mj}  & \mbox{ if $i >0,\ j$ is odd } \\
	 g ^{ni+mj} & \mbox{ if $i >0,\ j$ is even } \\
\end{array}\right.
	\end{equation*}
\end{proof}

  The  proof  for the other three polynomials in $\ref{t8}$  uses 
  similar arguments and so omitted.
These quadrinomials in Theorems [\ref{t7}, \ref{t8}], as permutations 
have cycle type 
$ 2^{\frac{n(m-1)}{4}} +1^{\frac{n(m+1)+2}{2}}$.

\section*{Appendix}
Here we list the permutation trinomials obtained by applying 
our theorems. These are all involutions over the field $\ff_{41}$ some of them with  0 as the only fixed point and others having 21 fixed-points.

\vspace{6pt}
Involutions with single fixed point, from Theorem \ref{t1}:  

\noindent  $ 26x^{31} + 29x^{11} + 22x, \quad
 3x^{31} + 27x^{11} + 9x, \quad
36x^{31} + 37x^{11}, \quad
3x^{31} + 27x^{11} + 32x,$

\noindent $26x^{31} + 29x^{11} + 19x,\quad
15x^{31} + 12x^{11} + 19x,  \quad
38x^{31} + 14x^{11} + 32x,\quad
5x^{31} + 4x^{11},$ 

\noindent
$38x^{31} + 14x^{11} + 9x,\quad 
15x^{31} + 12x^{11} + 22x. $

\vspace{6pt}
Involutions with single fixed point, from Theorem \ref{t2}:  

\noindent
	$ 11 x^{31} + 30 x^{21} + 32 x^{11} + 20 x,\quad
 6 x^{31} + 16 x^{21} + 10 x^{11} + 38 x,\quad
 	31 x^{31} + 38 x^{11},  $

\noindent
 $6 x^{31} + 25 x^{21} + 10 x^{11} + 3 x,\quad	
11 x^{31} + 11 x^{21} + 32 x^{11} + 21 x,\quad	
30 x^{31} + 11 x^{21} + 9 x^{11} + 21 x,   $

\noindent
$ 35 x^{31} + 25 x^{21} + 31 x^{11} + 3x,\quad 	
 10 x^{31} + 3 x^{11},\quad
 35 x^{31} + 16 x^{21} + 31 x^{11} + 38 x,\quad 
 30 x^{31} + 30 x^{21} + 9 x^{11} + 20 x. $

\vspace{6pt}
Involutions with more fixed points, from Theorem \ref{t3}(a):  

\noindent
 $7 x^{31} + 31 x^{21} + 7 x^{11} + 32 x,\quad 
 15 x^{31} + 4 x^{21} + 15 x^{11} + 5 x,\quad
16 x^{31} + 20 x^{21} + 16 x^{11} + 21 x,$

\noindent
$15 x^{31} + 36 x^{21} + 15 x^{11} + 37 x,\quad
 7 x^{31} + 9 x^{21} + 7 x^{11} + 10 x,	\quad
 34 x^{31} + 9 x^{21} + 34 x^{11} + 10 x,$

\noindent
  $26 x^{31} + 36 x^{21} + 26 x^{11} + 37 x,\quad
25 x^{31} + 20 x^{21} + 25 x^{11} + 21 x, \quad
 26 x^{31} + 4 x^{21} + 26 x^{11} + 5 x,$

\noindent
$34 x^{31} + 31 x^{21} + 34 x^{11} + 32 x.$

\vspace{6pt}
Involutions with more fixed points, from Theorem \ref{t3}(b):  

\noindent
$19 x^{31} + 10 x^{21} + 22 x^{11} + 32 x,\quad
29 x^{31} + 37 x^{21} + 12 x^{11} + 5 x,\quad
 20 x^{31} + 21 x^{21} + 21 x^{11} + 21 x,$ 

\noindent
$ 29 x^{31} + 5 x^{21} + 12 x^{11} + 37 x,\quad
19 x^{31} + 32 x^{21} + 22 x^{11} + 10 x,	\quad
22 x^{31} + 32 x^{21} + 19 x^{11} + 10 x,\quad
 $

\noindent
 $12 x^{31} + 5 x^{21} + 29 x^{11} + 37 x,\quad
 21 x^{31} + 21 x^{21} + 20 x^{11} + 21 x,\quad
   12 x^{31} + 37 x^{21} + 29 x^{11} + 5 x,$

\noindent
 $22 x^{31} + 10 x^{21} + 19 x^{11} + 32 x.
$

\end{document}